\newtheorem{thm}{Theorem}[section]
\newtheorem{lemm}[thm]{Lemma}
\newtheorem{coro}[thm]{Corollary}
\newtheorem{prop}[thm]{Proposition}
\newtheorem{conj}[thm]{Conjecture}
\theoremstyle{definition}
\newtheorem{remark}[thm]{Remark}
\begin{document}

\title{On a stability of higher level Coxeter unipotent representations}

\author{Zhe Chen}

\address{Department of Mathematics, Shantou University, Shantou, 515821, China}
\email{zhechencz@gmail.com}

\begin{abstract}
Let  $\mathbb{G}$ be a connected reductive group over $\mathcal{O}$, a complete discrete valuation ring with finite residue field $\mathbb{F}_q$. Let $R_{T_r,U_r}^{\theta}$ be a level $r$ Deligne--Lusztig representation of $\mathbb{G}(\mathcal{O})$, where $r$ is a positive integer. We show that, if $q$ is not small, and if $T$ is Coxeter and $\theta=1$, then $R_{T_r,U_r}^1$ degenerates to the $r=1$ case. For $\mathbb{G}=\mathrm{GL}_2$ (or $\mathrm{SL}_2$), as an application we give the dimensions and decompositions of all  $R_{T_r,U_r}^{\theta}$ for Coxeter $T$. This in turn leads us to state a conjectural sign formula for $R_{T_r,U_r}^{\theta}$, for general $(\mathbb{G}, T, \theta,r)$.
\end{abstract}

\maketitle

\section{Introduction and preliminaries}

Let $\mathcal{O}$ be a complete discrete valuation ring with residue field $\mathbb{F}_q$, and let $\mathbb{G}$ be a connected reductive group over $\mathcal{O}$. Fix a uniformiser $\pi$ of $\mathcal{O}$. Then every smooth irreducible representation of $\mathbb{G}(\mathcal{O})$ factors through the finite quotient group $\mathbb{G}(\mathcal{O}_r)$ for some $r\in\mathbb{Z}_{>0}$, where $\mathcal{O}_r:=\mathcal{O}/\pi^r$.

\vspace{2mm} When $r=1$, a family of virtual representations was constructed in \cite{DL1976} by $\ell$-adic cohomology, which realises all irreducible representations of $\mathbb{G}(\mathbb{F}_q)$. In 1979,  Lusztig \cite{Lusztig1979SomeRemarks} generalises this construction to every $r$, with the proofs been completed later in \cite{Lusztig2004RepsFinRings} (for $\mathrm{char}(\mathcal{O})>0$) and \cite{Sta2009Unramified} (in general). However, when $r>1$ it is known that not all irreducible representations appear in this construction, so two fundamental questions arise:
\begin{itemize}
\item How to generalise \cite{Lusztig1979SomeRemarks} to recover the missing representations?
\item How to characterise/understand the representations given in \cite{Lusztig1979SomeRemarks}?
\end{itemize}
The first question has been investigated in \cite{Sta2011ExtendedDL} and \cite{Chen_2019_flag_orbit}, using  tamely ramified torus and combinatorial analogues, respectively, for $\mathrm{GL}_n$ and $\mathrm{SL}_n$; in some special cases it is shown that the generalised constructions therein afford all irreducible representations. 

\vspace{2mm} The second question is answered for generic parameters, in  \cite{ChenStasinski_2016_algebraisation} and \cite{ChenStasinski_2023_algebraisation_II}, by establishing an explicit algebraic realisation  for the geometrically-constructed virtual representations given in \cite{Lusztig1979SomeRemarks}.  Here, the notion of generic parameters is an analogue of the notion of regular semisimple elements (in a Lie algebra), and for $\mathrm{GL}_n$ this analogue can be phrased to be a precise relation via an orbit map; in particular, almost all parameters are generic. For details, see \cite[Proposition~3.9, Theorem~3.7, and Proposition~2.2]{ChenStasinski_2023_algebraisation_II}.

\vspace{2mm} However, the non-generic parameter case is not well-understood, and the original motivation of this  note is to answer the second question with the trivial parameter in the Coxeter setting. Indeed, when $r=1$,  the involved irreducible representations  have been studied in  \cite{Lusztig_1976_CoxOrbitsFrob}, in which it is shown that they are parametrised by the eigenvalues of the Frobenius action. Since then, these representations play an important role in understanding the representation theory of finite groups of Lie type. Here we call them the Coxeter unipotent representations, in which the term ``unipotent'' comes from Lusztig's Jordan decomposition of characters (see \cite{Lusztig_84_OrangeBook}). For $r>1$, the corresponding representations form a natural candidate of higher level Coxeter unipotent representations, and shall be expected to play a similar role. However, our first result is a negative one, showing that this candidate degenerates to the $r=1$ case. 

\vspace{2mm} As such, the notion of Coxeter unipotent representations for $r>1$ must be revised, and it would be interesting to see whether the constructions in \cite{Sta2011ExtendedDL} or \cite{Chen_2019_flag_orbit} can lead to a correct notion of higher level Coxeter unipotent representations. In this short note we do not investigate this aspect, instead, we will use the above results to make some computations in the special case of $\mathrm{GL}_2$ and $\mathrm{SL}_2$, which in turn suggests a neat conjectural formula linking the signs and dimensions of Lusztig's virtual representations (for any parameter).

\vspace{2mm} To state precisely our works in this note, let us give a  brief recall of the construction in \cite{Lusztig1979SomeRemarks}: Let $\mathcal{O}^{\mathrm{ur}}$ be the ring of integers in a maximal unramified extension of $\mathrm{Frac}(\mathcal{O})$, and let $U\rtimes T$ be a Levi decomposition of a Borel subgroup of $\mathbb{G}_{\mathcal{O}^{\mathrm{ur}}}$, where $U$ is the unipotent radical and $T$ a maximal torus. Then  there is a natural algebraic group structure on $\mathbb{G}(\mathcal{O}^{\mathrm{ur}}/\pi^r)$ (resp.\  $U(\mathcal{O}^{\mathrm{ur}}/\pi^r)$, $T(\mathcal{O}^{\mathrm{ur}}/\pi^r)$), denoted by $G_r$ (resp.\ $U_r$, $T_r$); there is a geometric Frobenius endomorphism $F$ on $G_r$  satisfying that $G_r^F\cong \mathbb{G}(\mathcal{O}_r)$ as finite groups. Let $L$ be the Lang map associated to $F$. Assume that $T_r=FT_r$. Then there is a virtual $G_r^F$-representation
$$R_{T_r,U_r}^{\theta}:=\sum_{i}(-1)^iH_c^i(L^{-1}(FU_r),\overline{\mathbb{Q}}_{\ell})_{\theta}$$
for each $\theta\in\mathrm{Irr}(T_r^F)$, where $H^i_c(-,\overline{\mathbb{Q}}_{\ell})_{\theta}$ denotes the $\theta$-isotypical part of compactly-supported $\ell$-adic cohomology group (here $\ell\nmid q$). For details, see \cite{Lusztig2004RepsFinRings} and \cite{Sta2009Unramified}.

\vspace{2mm} Suppose that $\theta=1$ and $T$ is Coxeter. When $r=1$ the representations inside $R_{T_1,U_1}^1$ are studied by Lusztig in the seminal work \cite{Lusztig_1976_CoxOrbitsFrob}. For $r\geq2$ it is natural to expect that $R_{T_r,U_r}^1$ will provide further interesting  representations. However, Lusztig's computation in \cite[Subsection~3.4]{Lusztig2004RepsFinRings} indicates that this is not the case for  $\mathrm{SL}_2(\mathbb{F}_q[[\pi]]/\pi^2)$. In  Theorem~\ref{thm:main} (and Proposition~\ref{prop:main}) we give an extension of this phenomenon for general $\mathbb{G}$ and $r\geq 2$. A new insight of our argument is an inner product linking different levels (Proposition~\ref{lemma:inner product linking different levels}).

\vspace{2mm} For general $\theta$, it is desirable to get the dimensions and decompositions of $R_{T_r,U_r}^{\theta}$; actually, the $\mathrm{SL}_2(\mathbb{F}_q[[\pi]]/\pi^2)$ case treated in \cite[Subsection~3.4]{Lusztig2004RepsFinRings} has already been very useful on testing potential properties of $R_{T_r,U_r}^{\theta}$. In Proposition~\ref{prop:GL_2}, based on the above result and earlier works, we produce these information of Coxeter $R_{T_r,U_r}^{\theta}$ for $\mathrm{GL}_2(\mathcal{O}_r)$, for any  $r\geq2$. Then in Section~\ref{sec:remark} we present a conjectural sign formula for  $R_{T_r,U_r}^{\theta}$ for a general quadruple $(\mathbb{G},T,\theta,r)$.

\vspace{2mm} \noindent {\bf Acknowledgement.} The author thanks Alexander Stasinski and  Sian Nie for helpful conversations around earlier versions of this short note. During the preparation of this work, ZC is partially supported by the Natural Science Foundation of Guangdong No.~2023A1515010561 and NSFC No.~12171297.

\section{Stabilities of inner products and representations}

Given a finite group $H$ and a normal subgroup $N$, if $\chi$ is a representation of $H/N$, we shall denote by $\widetilde{\chi}$ the composition of $\chi$ with the quotient map $H\rightarrow H/N$. So $\widetilde{\chi}$ is a representation of $H$, and it is usually called the trivial inflation of $\chi$ along $H\rightarrow H/N$.

\vspace{2mm} For $r'$ a positive integer not greater than $r$, we shall write $\rho_{r'}\colon G_r\rightarrow G_{r'}$ for the reduction map modulo $\pi^{r'}$.

\begin{thm}\label{thm:main}
Assume that $q\geq7$ or $\mathbb{G}=\mathrm{GL}_n$, and assume that $T$ is Coxeter. Then for every $r\geq1$:
$$R_{T_r,U_r}^1\cong  \widetilde{R_{T_1,U_1}^1}$$
as representations of $G_r^F$, where the right hand side is the trivial inflation along $\rho_1\colon G_r\rightarrow G_1$.
\end{thm}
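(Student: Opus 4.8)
The plan is to prove the stronger statement that the two virtual characters $R^1_{T_r,U_r}$ and $\widetilde{R^1_{T_1,U_1}}$ coincide in the character ring of $G_r^F$, and then to observe that the right-hand side is a genuine representation, so the isomorphism follows. I would argue by induction on $r$, the case $r=1$ being vacuous. Writing $K_{r-1}:=\ker\rho_{r-1}$ for the congruence kernel and assuming $R^1_{T_{r-1},U_{r-1}}\cong\widetilde{R^1_{T_1,U_1}}$ at level $r-1$, it is enough to show that $R^1_{T_r,U_r}$ agrees with the inflation of $R^1_{T_{r-1},U_{r-1}}$ along $\rho_{r-1}$. Setting
$$\Delta:=R^1_{T_r,U_r}-\widetilde{R^1_{T_{r-1},U_{r-1}}},$$
this amounts to proving $\langle\Delta,\Delta\rangle_{G_r^F}=0$, since a virtual character of zero norm is zero. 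Because the multiplicities are integers, the vanishing of the norm forces every constituent of $R^1_{T_r,U_r}$ to factor through $\rho_{r-1}$ (hence, by induction, through $\rho_1$) with exactly the level-$1$ multiplicities, which is precisely the asserted degeneration.

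Expanding the norm, the problem splits into three pairings, all of which I would compare to the fixed quantity $N_1:=\langle R^1_{T_1,U_1},R^1_{T_1,U_1}\rangle_{G_1^F}$. The pairing of the inflation with itself equals $\langle R^1_{T_{r-1},U_{r-1}},R^1_{T_{r-1},U_{r-1}}\rangle_{G_{r-1}^F}$, since trivial inflation is an isometry on class functions, and this is $N_1$ by the inductive hypothesis. The mixed pairing $\langle R^1_{T_r,U_r},\widetilde{R^1_{T_{r-1},U_{r-1}}}\rangle_{G_r^F}$ is exactly the situation handled by Proposition~\ref{lemma:inner product linking different levels}: since the inflated character is constant on $K_{r-1}$-cosets, averaging $R^1_{T_r,U_r}$ over these cosets replaces it by the character of its $K_{r-1}$-invariants as a $G_{r-1}^F$-representation, and the reduction map from the level-$r$ Deligne--Lusztig variety $L^{-1}(FU_r)$ onto the level-$(r-1)$ one identifies this average with level-$(r-1)$ Deligne--Lusztig data. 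The net effect is $\langle R^1_{T_r,U_r},\widetilde{R^1_{T_{r-1},U_{r-1}}}\rangle_{G_r^F}=\langle R^1_{T_{r-1},U_{r-1}},R^1_{T_{r-1},U_{r-1}}\rangle_{G_{r-1}^F}=N_1$.

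The remaining and decisive pairing is the self-inner product $\langle R^1_{T_r,U_r},R^1_{T_r,U_r}\rangle_{G_r^F}$, which I must also show equals $N_1$. For this I would invoke the Mackey-type inner product formula for higher-level Deligne--Lusztig virtual representations, whose terms are indexed by elements of the relative Weyl group together with the congruence filtration of $T_r$; with $\theta=1$ and $T$ Coxeter I expect all but the terms already present at level $1$ to cancel or to vanish, so that the sum stabilises at $N_1$. Controlling these extra terms is exactly where the hypothesis $q\geq7$ (or the explicit structure available for $\mathbb{G}=\mathrm{GL}_n$) enters: the bound rules out the accidental coincidences among Frobenius eigenvalues and torus characters that could otherwise produce spurious contributions to the norm, and it is what guarantees that the level-$1$ Coxeter constituents remain distinct and that no new ones are created at higher level.

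The step I expect to be the main obstacle is this last self-norm computation: combining the inner product formula at level $r$ with the Coxeter condition to show that the count collapses to its level-$1$ value, and in particular making the $q\geq7$ threshold do its job uniformly in $r$. By contrast, the mixed pairing is a clean application of Proposition~\ref{lemma:inner product linking different levels}, and the reduction to a norm-vanishing statement is formal. Once the three pairings are all shown to equal $N_1$, the norm of $\Delta$ telescopes to $N_1-2N_1+N_1=0$, completing the induction; the cases excluded by the $q$-bound, and the precise decomposition behaviour, are then recorded separately in Proposition~\ref{prop:main}.
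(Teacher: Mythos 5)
Your overall strategy --- showing that the difference of the two virtual characters has zero norm by expanding it into three inner products --- is exactly the paper's argument; the only cosmetic difference is that you induct from level $r-1$ to level $r$, while the paper compares level $r$ directly with level $1$, which makes the induction unnecessary. The one substantive point concerns the step you single out as the main obstacle, the self-pairing $\langle R^1_{T_r,U_r},R^1_{T_r,U_r}\rangle_{G_r^F}$: there is no new Mackey-type computation to carry out here, because this is precisely Proposition~\ref{lemma:DI} (the Chan--Ivanov/Dudas--Ivanov orthogonality formula, which is also exactly where the hypothesis ``$q\geq7$ or $\mathbb{G}=\mathrm{GL}_n$'' enters), giving $\#\{w\in(N_{G_r}(T_r)/T_r)^F\}$ for $\theta=1$. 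The only remaining check is that $(N_{G_r}(T_r)/T_r)^F\cong(N_{G_1}(T_1)/T_1)^F$, which the paper deduces from the SGA3 identification $N_{G_r}(T_r)/T_r\cong W$ compatibly with reduction modulo $\pi$; with that, all three pairings equal $N_1$ and the norm of the difference telescopes to zero as you describe. One small correction: $\widetilde{R^1_{T_1,U_1}}$ is generally \emph{not} a genuine representation (for $\mathrm{GL}_2$ it is $1-\mathrm{St}$), so the conclusion is an equality of virtual characters in the Grothendieck group, which is all the theorem asserts.
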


The proof is a combination of inner product formulae, of which the starting point is:

\begin{prop}[Deligne--Lusztig]\label{lemma:DL}
We have
\begin{equation*}
\langle   R_{T_1,U_1}^{\theta}, R_{T_1,U_1}^{\theta}    \rangle_{G_1^F}=\# \{w\in (N_{G_1}(T_1)/T_1)^F\mid {^w\theta}=\theta\}.
\end{equation*}
\end{prop}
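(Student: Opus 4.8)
The plan is to recognise this statement as the equal-torus, equal-parameter specialisation of the general Deligne--Lusztig orthogonality relation, and to derive it by reducing the inner product to a Lefschetz fixed-point count on a product of Deligne--Lusztig varieties. Write $X=L^{-1}(FU_1)$, so that the defining formula $R_{T_1,U_1}^{\theta}=\sum_i(-1)^iH_c^i(X,\overline{\mathbb{Q}}_{\ell})_{\theta}$ exhibits $R_{T_1,U_1}^{\theta}$ as the $\theta$-isotypic component, under the commuting right $T_1^F$-action on $X$, of the virtual $G_1^F$-module $\sum_i(-1)^iH_c^i(X,\overline{\mathbb{Q}}_{\ell})$. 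The first step is therefore to expand
$$\langle R_{T_1,U_1}^{\theta}, R_{T_1,U_1}^{\theta}\rangle_{G_1^F}=\frac{1}{|T_1^F|^2}\sum_{s,t\in T_1^F}\theta(s)^{-1}\theta(t)\,\big\langle H_c^*(X)_s, H_c^*(X)_t\big\rangle_{G_1^F},$$
and to rewrite the right-hand pairing, via Poincar\'e duality and the K\"unneth formula, as an alternating trace of $(s,t)\times F$ acting on $H_c^*(X\times X,\overline{\mathbb{Q}}_{\ell})$.

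Next I would pass to the diagonal quotient. Since $G_1^F$ acts freely on $X$ by left translation, the diagonal $G_1^F$-invariants of $H_c^*(X\times X)$ are computed by the cohomology of $G_1^F\backslash(X\times X)$, and the Lang--Steinberg decomposition sorts the $G_1^F$-orbits of pairs $(x,x')$ by the relative position $x^{-1}x'$, which lands in the set of $g\in G_1$ carrying $T_1$ to a maximal torus of the same type. Keeping track of the two commuting right $T_1^F$-actions, the orbit space fibres over the finite set of $F$-stable double cosets $T_1^F\backslash\{g:{}^gT_1=T_1\}/T_1^F$, that is, over $(N_{G_1}(T_1)/T_1)^F$, and each fibre is, up to the torus actions, a Deligne--Lusztig-type variety $X_w$ attached to $w\in(N_{G_1}(T_1)/T_1)^F$.

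The decisive input is the evaluation of each fibre contribution. For a representative $w$, the alternating sum of the compactly-supported cohomology of $X_w$ equals $1$: this is precisely the connectedness and Euler-characteristic computation for $L^{-1}(U_1\cap{}^wU_1)$ carried out in \cite{DL1976}, which I would cite rather than reprove. The $\theta$-isotypic projection then contributes the scalar
$$\frac{1}{|T_1^F|}\sum_{t\in T_1^F}\theta(t)^{-1}\,\theta({}^wt),$$
which by orthogonality of characters equals $1$ when ${}^w\theta=\theta$ and $0$ otherwise. Summing over $w\in(N_{G_1}(T_1)/T_1)^F$ yields $\#\{w:{}^w\theta=\theta\}$, as claimed.

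The main obstacle is the Euler-characteristic evaluation in the last step: establishing that each nonzero fibre contributes exactly $1$ rests on the delicate geometry of Deligne--Lusztig varieties for possibly non-split tori, together with the disjointness that forces the cross terms between distinct double cosets to vanish. This is the genuinely geometric heart of the argument and the part I would import wholesale from \cite{DL1976}; the remaining bookkeeping with the two torus actions and the isotypic projection is routine character theory.
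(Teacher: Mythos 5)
Your proposal is correct and ultimately coincides with the paper's approach: the paper disposes of this statement with a one-line citation to \cite{DL1976}, Theorem~6.8, and what you have written is essentially an outline of the proof of that very theorem specialised to $T=T'$ and $\theta=\theta'$, with its geometric heart (the vanishing of cross terms and the Euler-characteristic evaluation of each cell) still imported from \cite{DL1976}, so no new content is added or lost. One small slip worth fixing: in the K\"unneth/Lefschetz step the operator whose alternating trace you take is $(s,t)$ acting on the diagonal $G_1^F$-invariants of $H_c^*(X\times X,\overline{\mathbb{Q}}_{\ell})$; no extra Frobenius factor enters the inner product over $G_1^F$.
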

\begin{proof}
This is a special case of \cite[Theorem~6.8]{DL1976}.
\end{proof}

For us, we need the following extension given by Chan--Ivanov \cite{ChanIvanov_CambJ_2023_loopDL_GL_n} and Dudas--Ivanov \cite{Dudas_Ivanov_Orthogonality_Coxeter}:

\begin{prop}[Chan--Ivanov, Dudas--Ivanov]\label{lemma:DI}
Assume that $q\geq7$ or $\mathbb{G}=\mathrm{GL}_n$, and assume that $T$ is Coxeter. Then
\begin{equation*}
\langle  {R_{T_r,U_r}^{\theta}}, R_{T_r,U_r}^{\theta}     \rangle_{G_r^F}=\# \{w\in (N_{G_r}(T_r)/T_r)^F\mid {^w\theta}=\theta\}.
\end{equation*}
\end{prop}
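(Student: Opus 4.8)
The plan is to run the higher-level analogue of the cohomological computation underlying the classical formula of Proposition~\ref{lemma:DL}. Writing $X_r:=L^{-1}(FU_r)$, the starting point is the standard reformulation of the self-inner product as an Euler characteristic: by the Lefschetz fixed-point formula together with the usual duality manipulations (using that $G_r^F$ acts freely on $X_r$) one obtains
\begin{equation*}
\langle R_{T_r,U_r}^{\theta}, R_{T_r,U_r}^{\theta}\rangle_{G_r^F} = \sum_i (-1)^i \dim H_c^i\bigl(G_r^F\backslash(X_r\times X_r)\bigr)_{\theta\boxtimes\theta^{-1}},
\end{equation*}
where $G_r^F$ acts diagonally by left translation and the residual $T_r^F\times T_r^F$-action is used to extract the isotypic part on which the two factors act through $\theta$ and $\theta^{-1}$. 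Everything is thereby reduced to understanding the virtual $T_r^F\times T_r^F$-module $\bigoplus_i(-1)^iH_c^i(G_r^F\backslash(X_r\times X_r))$, and the right-hand side of the proposition should emerge as the number of strata whose contribution survives the isotypic projection.

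First I would stratify the quotient through the $G_r^F$-invariant map $(x,x')\mapsto x^{-1}x'$: setting $g=x^{-1}x'$ the defining relations force $F(g)\in FU_r\cdot g\cdot FU_r$, so the image is controlled by the $U_r$-double-coset (Bruhat-type) position of $g$ in $G_r$. For $r=1$ this is exactly the stratification over $W(T_1,T_1)$ used in Proposition~\ref{lemma:DL}, and for $r\geq2$ the strata are indexed by $F$-compatible lifts of this Weyl datum, that is, by elements of $(N_{G_r}(T_r)/T_r)^F$ together with the extra unipotent directions introduced by the higher congruence structure. Next, for each stratum I would compute the $T_r^F\times T_r^F$-equivariant cohomology. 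The Coxeter hypothesis is what makes this tractable: it should force the relevant pieces to be, up to iterated affine fibrations, products of a torus with affine space, so that their compactly-supported cohomology is concentrated and its $T_r^F\times T_r^F$-decomposition is governed by Gauss/Kloosterman-type character sums. Finally, summing over strata, the strata indexed by $w$ with ${}^w\theta\neq\theta$ should contribute zero to the $\theta\boxtimes\theta^{-1}$-isotypic part because the associated character sums vanish, while each $w$ with ${}^w\theta=\theta$ contributes exactly $1$; this reproduces the claimed count.

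The hard part is precisely the geometric control of the higher-level strata. For $r\geq2$ the varieties $X_r$ are no longer the affine-space-like Deligne--Lusztig varieties of the $r=1$ case, and their compactly-supported cohomology can a priori spread across many degrees, so neither the concentration of cohomology nor the vanishing of the off-diagonal terms is automatic. This is exactly the input supplied by the cited works: for $\mathbb{G}=\mathrm{GL}_n$ one uses the explicit coordinatisation of the loop Deligne--Lusztig varieties via the Moy--Prasad/Iwahori filtrations of Chan--Ivanov \cite{ChanIvanov_CambJ_2023_loopDL_GL_n}, which allows the fibre cohomology to be evaluated directly, while for a general Coxeter torus one invokes the orthogonality argument of Dudas--Ivanov \cite{Dudas_Ivanov_Orthogonality_Coxeter}. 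The hypothesis $q\geq7$ enters at this last stage to guarantee that the character sums governing the stratum contributions are nondegenerate; for small $q$ accidental coincidences can occur and the required concentration and cancellation can fail. Granting this geometric input, the summation over strata collapses to the count on the right-hand side exactly as in the classical situation.
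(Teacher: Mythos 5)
Your proposal ultimately rests on the same input as the paper: the paper's proof of this proposition is nothing more than the observation that it is a special case of \cite[Theorem~3.1]{ChanIvanov_CambJ_2023_loopDL_GL_n} for $\mathrm{GL}_n$ and of \cite[Theorem~3.2.3]{Dudas_Ivanov_Orthogonality_Coxeter} in general, which is exactly where you defer the ``hard part'' as well. Your surrounding sketch of the quotient $G_r^F\backslash(X_r\times X_r)$, its stratification by double-coset position, and the role of the Coxeter and $q\geq7$ hypotheses is a reasonable (if heuristic) account of what those cited works actually do, but it is not an independent argument, so the two proofs are essentially the same.
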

\begin{proof}
For $\mathrm{GL}_n$ this is a special case of \cite[Theorem~3.1]{ChanIvanov_CambJ_2023_loopDL_GL_n}, and in general this is a special case of \cite[Theorem~3.2.3]{Dudas_Ivanov_Orthogonality_Coxeter}.
\end{proof}

We also need an inner product linking Deligne--Lusztig representations at different levels: (Note that here $T$ is not assumed to be Coxeter.)

\begin{prop}\label{lemma:inner product linking different levels}
Let $r'$ be a positive integer not greater than $r$. For  $\theta\in\mathrm{Irr}(T_{r}^F)$ and $\theta'\in\mathrm{Irr}(T_{r'}^F)$ we have:
\begin{itemize}
\item[(a)] If $\theta$  is the trivial inflation of a character $\theta''\in\mathrm{Irr}(T_{r'}^F)$, then
\begin{equation*}
\langle   \widetilde{R_{T_{r'},U_{r'}}^{\theta'}}, R_{T_r,U_r}^{\theta}     \rangle_{G_r^F}
=\langle   R_{T_{r'},U_{r'}}^{\theta'}, R_{T_{r'},U_{r'}}^{\theta''}   \rangle_{G_{r'}^F};
\end{equation*}
\item[(b)] if $\theta$ is non-trivial on $\mathrm{Ker}(\rho_{r'})\cap T_r^F$, then
\begin{equation*}
\langle   \widetilde{R_{T_{r'},U_{r'}}^{\theta'}}, R_{T_r,U_r}^{\theta}     \rangle_{G_r^F}=0.
\end{equation*}
\end{itemize}
\end{prop}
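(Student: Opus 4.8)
The plan is to reduce the statement to computing the $K$-invariants of $R_{T_r,U_r}^{\theta}$, where $K:=\ker(\rho_{r'})\cap G_r^F$, and then to carry out that computation geometrically. First I would invoke the inflation--invariants adjunction: writing $(-)^K$ for $K$-invariants, which produces a virtual character of $G_{r'}^F=G_r^F/K$, one has $\langle\widetilde{\chi},\psi\rangle_{G_r^F}=\langle\chi,\psi^K\rangle_{G_{r'}^F}$ for all virtual characters $\chi$ of $G_{r'}^F$ and $\psi$ of $G_r^F$, by the adjunction $\mathrm{Hom}_{G_r^F}(\widetilde{\chi},\psi)=\mathrm{Hom}_{G_{r'}^F}(\chi,\psi^K)$ extended linearly. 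Taking $\chi=R_{T_{r'},U_{r'}}^{\theta'}$ and $\psi=R_{T_r,U_r}^{\theta}$ reduces both (a) and (b) to the identification of $(R_{T_r,U_r}^{\theta})^K$ as a virtual $G_{r'}^F$-representation. Since the left $K$-action on $L^{-1}(FU_r)$ is free and commutes with the right $T_r^F$-action, taking $K$-invariants is exact, and $(R_{T_r,U_r}^{\theta})^K=\sum_i(-1)^iH_c^i(K\backslash L^{-1}(FU_r),\overline{\mathbb{Q}}_{\ell})_\theta$.

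The geometric heart is the map $\overline{\rho}\colon K\backslash L^{-1}(FU_r)\to L^{-1}(FU_{r'})$ induced by $\rho_{r'}$, which is equivariant for the residual left $G_{r'}^F$-action and the right $T_r^F$-action (the latter factoring through $\rho_{r'}\colon T_r^F\to T_{r'}^F$). I would describe its fibers by lifting $\bar x\in L^{-1}(FU_{r'})$ to $x_0\in L^{-1}(FU_r)$ and solving the Lang condition inside the unipotent congruence kernel $\mathcal{K}:=\ker(\rho_{r'}\colon G_r\to G_{r'})$: the fiber of $\rho_{r'}$ over $\bar x$ equals $x_0\cdot L_\sigma^{-1}(\mathcal{U})$, where $\mathcal{U}:=\ker(\rho_{r'}\colon FU_r\to FU_{r'})$, the endomorphism $\sigma(w):={}^{L(x_0)}F(w)$ is a Frobenius on $\mathcal{K}$, and $L_\sigma(w)=w^{-1}\sigma(w)$. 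Conjugation by $x_0$ turns the left $K$-action into the left $\mathcal{K}^{\sigma}$-action, and the Lang map $L_\sigma$ then identifies $\mathcal{K}^{\sigma}\backslash L_\sigma^{-1}(\mathcal{U})\cong\mathcal{U}\cong\mathbb{A}^{d}$ with $d=\dim U_r-\dim U_{r'}$; thus, crucially thanks to the $K$-quotient, $\overline{\rho}$ is an affine-space bundle with fibers $\mathbb{A}^{d}$.

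Next I would pin down the right $N:=\ker(\rho_{r'})\cap T_r^F$-action on the fibers, noting that $N$ preserves each $\overline{\rho}$-fiber and acts trivially on the base. A direct computation shows $t\in N$ acts on $\mathcal{U}\cong\mathbb{A}^{d}$ by an affine transformation with linear part $\mathrm{Ad}(t^{-1})$, and since $t\equiv1\bmod\pi^{r'}$ this $\mathrm{Ad}(t)$ preserves the $\pi$-level filtration of $\mathcal{U}$ and is the identity on the associated graded, hence is unipotent of determinant $1$. Therefore $N$ acts trivially on $H_c^{2d}(\mathbb{A}^{d},\overline{\mathbb{Q}}_{\ell})\cong\overline{\mathbb{Q}}_{\ell}(-d)$, and by proper base change and the Leray spectral sequence $R\overline{\rho}_!\overline{\mathbb{Q}}_{\ell}\cong\overline{\mathbb{Q}}_{\ell}[-2d](-d)$ as $G_{r'}^F$-equivariant complexes, with $N$ acting trivially throughout. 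For (b), $\theta|_N\ne1$ then forces $H_c^i(K\backslash L^{-1}(FU_r))_\theta=0$, so $(R_{T_r,U_r}^{\theta})^K=0$ and the inner product vanishes. For (a), $\theta=\widetilde{\theta''}$ gives $H_c^i(K\backslash L^{-1}(FU_r))_\theta\cong H_c^{i-2d}(L^{-1}(FU_{r'}))_{\theta''}(-d)$, whence $(R_{T_r,U_r}^{\theta})^K=R_{T_{r'},U_{r'}}^{\theta''}$ (the even shift and the Tate twist being immaterial), and the adjunction delivers $\langle R_{T_{r'},U_{r'}}^{\theta'},R_{T_{r'},U_{r'}}^{\theta''}\rangle_{G_{r'}^F}$.

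The step I expect to be the main obstacle is the geometric one in the second and third paragraphs: proving rigorously that $\overline{\rho}$ is an affine-space bundle and that its top direct image is the constant sheaf, i.e. both the fiberwise identification with $\mathbb{A}^{d}$ after the $K$-quotient and the triviality of the resulting determinant/monodromy character $R^{2d}\overline{\rho}_!\overline{\mathbb{Q}}_{\ell}\cong\overline{\mathbb{Q}}_{\ell}(-d)$. The unipotence of $\mathrm{Ad}(t)$ on the graded kernel is exactly what trivializes this character and simultaneously produces the vanishing in (b); the surjectivity of $\rho_{r'}$ on Lang preimages and the compatibility of the various left and right actions are the routine but necessary accompanying checks.
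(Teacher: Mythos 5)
Your proposal is correct in substance, but it takes a genuinely different geometric route through the middle of the argument. Both proofs open with the same adjunction $\langle\widetilde{\chi},\psi\rangle_{G_r^F}=\langle\chi,\psi^{K}\rangle_{G_{r'}^F}$ and close by recognising a level-$r'$ inner product via K\"unneth; the divergence is in how the $K$-invariants are computed. The paper never fibres $K\backslash L^{-1}(FU_r)$ over $L'^{-1}(FU_{r'})$ directly: it passes to the Mackey-type space $\Sigma\subset FU_{r'}\times FU_r\times G_{r'}$, identifies it equivariantly with $G_{r'}^F\backslash\bigl(L'^{-1}(FU_{r'})\times K\backslash L^{-1}(FU_r)\bigr)$, and then collapses $\Sigma$ onto $\Sigma'$ along $v\mapsto\rho_{r'}(v)$, whose fibres are the fibres of the single, manifestly trivial affine fibration $FU_r\to FU_{r'}$; on $\Sigma'$ the $T_r^F$-action visibly factors through $T_{r'}^F$, which gives (a) and (b) in one stroke. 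You instead compute $(R_{T_r,U_r}^{\theta})^{K}$ outright by fibring $\overline{\rho}\colon K\backslash L^{-1}(FU_r)\to L'^{-1}(FU_{r'})$ and identifying each fibre, after the $K$-quotient, with $\mathcal{U}\cong\mathbb{A}^d$ via a twisted Lang map $L_\sigma$ on the congruence kernel $\mathcal{K}$. Your fibre computation is correct (one needs $\mathcal{K}$ connected and Lang--Steinberg for $\sigma=\mathrm{Ad}(L(x_0))\circ F$, both of which hold), and it yields the stronger statement that $(R_{T_r,U_r}^{\theta})^{K}$ is $R_{T_{r'},U_{r'}}^{\theta''}$ or $0$ as a virtual module, not merely the two inner-product identities. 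The one place you overreach is the sheaf-theoretic packaging: since $\sigma$ depends on the base point through $L(x_0)$, the map $\overline{\rho}$ is not obviously locally trivial, so $R\overline{\rho}_!\overline{\mathbb{Q}}_{\ell}\cong\overline{\mathbb{Q}}_{\ell}[-2d](-d)$ does not follow from proper base change and fibrewise information alone. The standard repair --- and what the paper's appeal to \cite[8.1.13]{DM_book_2nd_edition} amounts to --- is to compare Lefschetz numbers by counting fixed points of $(g,t)F^{n}$ for $n\gg0$: each fibre over a fixed point of the base is an affine space of constant dimension $d$ stable under $(g,t)F^{n}$, so $\#X^{(g,t)F^{n}}=q^{nd}\,\#Y^{(g,\rho_{r'}(t))F^{n}}$, which already shows the $T_r^F$-action on the alternating sum factors through $T_{r'}^F$ and delivers both (a) and (b) without any discussion of the monodromy of $R^{2d}\overline{\rho}_!$ or the unipotence of $\mathrm{Ad}(t)$ on the fibres (indeed, any automorphism acts trivially on the top compactly supported cohomology of a smooth connected variety, so that refinement was never needed). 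With that substitution your argument is complete; what the paper's detour through $\Sigma$ buys is precisely the avoidance of this base-point-dependent fibration.
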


\begin{proof}
Let $L'$ be the Lang map on $G_{r'}$ (we shall still use $F$ to denote the geometric Frobenius on $G_{r'}$). By \cite[7.1(b)]{serre1977lin_rep_finite_gr} we have 
$$\langle \widetilde{R_{T_{r'},U_{r'}}^{\theta'}}, R_{T_r,U_r}^{\theta} \rangle_{G_r^F}
=\langle  R_{T_{r'},U_{r'}}^{\theta'}, {^{\mathrm{Ker(\rho_{r'})}^F}R_{T_r,U_r}^{\theta}}    \rangle_{G_{r'}^F},$$
where the symbol ${^{\mathrm{Ker(\rho_{r'})}^F}(-)}$ means taking the subspace of $\mathrm{Ker(\rho_{r'})}^F$-invariant vectors. By the K\"{u}nneth formula the RHS in the above is equal to
\begin{equation}\label{formula:temp1}
\sum_i(-1)^i \dim 
H_c^i\left(
G_{r'}^F   \backslash 
\left(L'^{-1}(FU_{r'})\times   (\mathrm{Ker(\rho_{r'})}^F \backslash L^{-1}(FU_r))\right) 
\right)_{\theta'^{-1}\times \theta}.
\end{equation}
Now consider
$$\Sigma:=\{ (u,v,y)\in FU_{r'}\times FU_r\times G_{r'}\mid uF(y)=y\rho_{r'}(v)  \},$$
on which $T_{r'}^F\times T_r^F$ acts (from the right hand side) by
$$(t',t)\colon (u,v,y)\longmapsto (u^{t'},{v}^{t},{t'}^{-1}y\rho_{r'}(t)).$$
Note that the morphism
$${L'}^{-1}(FU_{r'})\times   L^{-1}(FU_r)\longrightarrow\Sigma,$$
given by
$$(g',g)\longmapsto (L'(g'),L(g),{g'}^{-1}\rho_{r'} (g)),$$
is surjective by the Lang--Steinberg theorem; then a direct computation shows that it induces a $T_{r'}^F\times T_r^F$-equivariant bijection
\begin{equation*}
G_{r'}^F\backslash \left({L'}^{-1}(FU_{r'})\times  (\mathrm{Ker}(\rho_{r'})^F\backslash L^{-1}(FU_r))\right)\longrightarrow \Sigma.
\end{equation*}
So (see e.g.\ \cite[8.1.13]{DM_book_2nd_edition})
$$\eqref{formula:temp1}=\sum_i(-1)^i\dim H_c^i(\Sigma)_{\theta'^{-1}\times\theta}.$$
Meanwhile, by the reduction map there is a $T_{r'}^F\times T_r^F$-equivariant surjection from $\Sigma$ to
$$\Sigma':=\{ (u,v,y)\in FU_{r'}\times FU_{r'}\times G_{r'}\mid uF(y)=yv  \},$$
on which $T_{r'}^F\times T_r^F$ acts through the quotient $T_{r'}^F\times T_{r'}^F$, whose fibres are isomorphic to an affine space ($\cong \mathrm{Ker}(\rho_{r'}|_{FU_r})$). So (\cite[8.1.13]{DM_book_2nd_edition}) we have
$$\eqref{formula:temp1}=\sum_i(-1)^i\dim H_c^i(\Sigma')_{\theta'^{-1}\times\theta}.$$

\vspace{2mm} For the cohomology of $\Sigma'$, again by  the K\"unneth formula we have (see also the argument of \cite[Proposition~2.2]{Lusztig2004RepsFinRings})
\begin{equation*}
\begin{split}
 \sum_{i}(-1)^i \dim & H^i_c(\Sigma')_{\theta'^{-1}\times\theta}\\
&= \left\langle
\sum_{i}(-1)^iH_c^i(L^{-1}(FU_{r'}),\overline{\mathbb{Q}}_{\ell})_{\theta'},\sum_{i}(-1)^iH_c^i(L^{-1}(FU_{r'}),\overline{\mathbb{Q}}_{\ell})_{\theta}
\right\rangle_{G_{r'}^F},
\end{split}
\end{equation*}
in which the right action of $T_r^F$ on $H_c^i(L^{-1}(FU_{r'}),\overline{\mathbb{Q}}_{\ell})$ is with respect to the quotient $T_r^F\rightarrow T_{r'}^F$. Now the assertion follows immediately.
\end{proof}

\begin{proof}[Proof of Theorem~\ref{thm:main}]
Let $W$ be the Weyl group generated by the simple reflections with respect to the root system of $\mathbb{G}_{\mathcal{O}^{\mathrm{ur}}}$ relative to $T$. Then according to \cite[XXII~3.4]{SGA3} there are natural isomorphisms $W\cong N_{G_r}(T_r)/T_r$ and $W\cong N_{G_1}(T_1)/T_1$, which are, by the construction in \cite[XXII~3.3]{SGA3}, compatible with the reduction map modulo $\pi$; since the reduction map commutes with $F$, this implies that $(N_{G_r}(T_r)/T_r)^F\cong (N_{G_1}(T_1)/T_1)^F$. Now taking $r'=1$ and $\theta=\widetilde{\theta'}=1$, in Proposition~\ref{lemma:DI} and Proposition~\ref{lemma:inner product linking different levels}, we get
$$|(N_{G_r}(T_r)/T_r)^F|
=\langle   R_{T_r,U_r}^1 , R_{T_r,U_r}^1      \rangle_{G_r^F}
=\langle   \widetilde{R_{T_1,U_1}^1}, R_{T_r,U_r}^1      \rangle_{G_r^F}
=\langle   \widetilde{R_{T_1,U_1}^1} , \widetilde{R_{T_{1},U_{1}}^1}    \rangle_{G_r^F},$$
which implies that
$$\langle   \widetilde{R_{T_1,U_1}^1}-R_{T_r,U_r}^1 , \widetilde{R_{T_{1},U_{1}}^1}-R_{T_r,U_r}^1      \rangle_{G_r^F}=0,$$
so $R_{T_r,U_r}^1\cong \widetilde{R_{T_1,U_1}^1}$.
\end{proof}

A slightly more general result follows from exactly the same method in the above:

\begin{prop}\label{prop:main}
Assume that $q\geq7$ or $\mathbb{G}=\mathrm{GL}_n$, and assume that $T$ is Coxeter. Given $r'\leq r$, if $\theta\in\mathrm{Irr}(T_{r}^F)$ is the trivial inflation of $\theta'\in\mathrm{Irr}(T_{r'}^F)$ along the reduction map $T_r^F\rightarrow T_{r'}^F$, then $R_{T_r,U_r}^{\theta}\cong  \widetilde{R_{T_{r'},U_{r'}}^{\theta'}}$.
\end{prop}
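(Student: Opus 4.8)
The plan is to mirror the proof of Theorem~\ref{thm:main}, replacing the trivial pair $(1,1)$ by the pair $(\theta,\theta')$ and the base level $r'=1$ by a general $r'\leq r$. Concretely, I would show that the virtual character $\widetilde{R_{T_{r'},U_{r'}}^{\theta'}}-R_{T_r,U_r}^{\theta}$ has norm zero in $G_r^F$, and then conclude exactly as before.

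First I would compute the three inner products arising after expanding the norm. By Proposition~\ref{lemma:DI} at level $r$, $\langle R_{T_r,U_r}^{\theta}, R_{T_r,U_r}^{\theta}\rangle_{G_r^F}=\#\{w\in(N_{G_r}(T_r)/T_r)^F\mid {}^w\theta=\theta\}$. For the cross term, since $\theta$ is by hypothesis the trivial inflation of $\theta'$, Proposition~\ref{lemma:inner product linking different levels}(a) applied with $\theta''=\theta'$ gives $\langle \widetilde{R_{T_{r'},U_{r'}}^{\theta'}}, R_{T_r,U_r}^{\theta}\rangle_{G_r^F}=\langle R_{T_{r'},U_{r'}}^{\theta'}, R_{T_{r'},U_{r'}}^{\theta'}\rangle_{G_{r'}^F}$, and invoking Proposition~\ref{lemma:DI} at level $r'$ this equals $\#\{w\in(N_{G_{r'}}(T_{r'})/T_{r'})^F\mid {}^w\theta'=\theta'\}$. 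Finally, since trivial inflation along $\rho_{r'}$ does not change inner products (the kernel acts trivially on both factors), $\langle \widetilde{R_{T_{r'},U_{r'}}^{\theta'}}, \widetilde{R_{T_{r'},U_{r'}}^{\theta'}}\rangle_{G_r^F}=\langle R_{T_{r'},U_{r'}}^{\theta'}, R_{T_{r'},U_{r'}}^{\theta'}\rangle_{G_{r'}^F}$, giving the same count.

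The step I expect to require the most care is matching the two stabilizer counts at levels $r$ and $r'$. As in the proof of Theorem~\ref{thm:main}, the construction of \cite{SGA3} yields compatible identifications $W\cong N_{G_r}(T_r)/T_r$ and $W\cong N_{G_{r'}}(T_{r'})/T_{r'}$, intertwined by the reduction map and commuting with $F$, so that $(N_{G_r}(T_r)/T_r)^F\cong(N_{G_{r'}}(T_{r'})/T_{r'})^F$. Because the $W$-action on characters is compatible with the reduction map $T_r^F\to T_{r'}^F$, for $w\in W^F$ the character ${}^w\theta$ is the trivial inflation of ${}^w\theta'$; since inflation is injective on characters, ${}^w\theta=\theta$ if and only if ${}^w\theta'=\theta'$. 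Hence the two stabilizers correspond under the isomorphism above and the counts agree.

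With all three inner products equal to a common value $N$, the norm of the difference expands as $N-2N+N=0$, so $\widetilde{R_{T_{r'},U_{r'}}^{\theta'}}-R_{T_r,U_r}^{\theta}$ is a virtual character of norm zero, hence identically zero. This yields $R_{T_r,U_r}^{\theta}\cong\widetilde{R_{T_{r'},U_{r'}}^{\theta'}}$, as claimed.
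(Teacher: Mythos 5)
Your proposal is correct and is essentially the paper's own argument: the paper gives no separate proof of Proposition~\ref{prop:main}, stating only that it ``follows from exactly the same method'' as Theorem~\ref{thm:main}, and your write-up carries out precisely that method, including the one genuinely new point (matching the stabilizer counts of $\theta$ and $\theta'$ under the $F$-equivariant identification of the two Weyl-group quotients), which you handle correctly via injectivity of inflation.
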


\begin{remark}\label{remark:Chan}
Note that in a recent new preprint, Charlotte Chan obtains this result (by a completely different method) in the general parahoric and elliptic setting, without assuming ``$q\geq7$''; see \cite[Theorem~5.2 and Corollary~5.3]{Chan_2024_Scalar_Product}. This in turn enables her to prove a generalisation of the inner product formulae in \cite{ChanIvanov_CambJ_2023_loopDL_GL_n},\cite{Dudas_Ivanov_Orthogonality_Coxeter}; in particular, after Chan's work the condition ``$q\geq7$'' in Proposition~\ref{lemma:DI} is no more required. For more details, see the argument of \cite[Proposition~6.4]{Chan_2024_Scalar_Product}.
\end{remark}

\section{The case of $\mathrm{GL}_2$}\label{sec:GL_2}

In this section we assume that $\mathbb{G}=\mathrm{GL}_n$ and $r\geq 2$. Write $\bar{\mathbb{G}}$ for the closed subgroup scheme $\mathrm{SL}_n$ of $\mathbb{G}$; similar notation $\bar{G}$ (resp.\ $\bar{T}, \bar{U}$, ...) applies to the corresponding closed subgroup of $G$ (resp.\ $T, U$, ...). Let $\bar{\theta}=\theta|_{\bar{T}_r^F}$. 

\begin{lemm}\label{lemm:GL_n to SL_n}
We have 
$$R_{\bar{T}_r,\bar{U}_r}^{\bar{\theta}}
=\mathrm{Res}^{G_r^F}_{\bar{G}_r^F}R_{T_r,U_r}^{\theta}.$$
\end{lemm}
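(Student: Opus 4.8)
The plan is to exploit the determinant morphism as a ``fibration'' of the Deligne--Lusztig variety over the finite group $\mathcal{O}_r^\times$, and then to recognise the induced decomposition of cohomology as an induced module on the torus side. First I would set $X := L^{-1}(FU_r)$ and $\bar X := \bar L^{-1}(F\bar U_r)$, where $\bar L = L|_{\bar G_r}$ is the Lang map on $\bar G_r=\mathrm{SL}_n$. Since the unipotent radical $U$ of a Borel of $\mathrm{GL}_n$ already lies in $\mathrm{SL}_n$, we have $\bar U_r = U_r$ and $FU_r\subseteq\ker(\det)$. The key initial observation is that $\det\colon G_r\to\mathbb{G}_m$ sends $X$ into the finite group $\mathbb{G}_m^F=\mathcal{O}_r^\times$: for $g\in X$ one has $L(g)=g^{-1}F(g)\in FU_r\subseteq\ker(\det)$, whence $\det(g)^{-1}F(\det(g))=1$, i.e. $\det(g)\in\mathcal{O}_r^\times$. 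This gives a decomposition into open-and-closed subvarieties $X=\bigsqcup_{a\in\mathcal{O}_r^\times}X_a$ with $X_a=X\cap\det{}^{-1}(a)$, and straight from the definitions $X_1=\bar X$.

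Next I would record how the two commuting actions interact with this decomposition. Left translation by $\bar G_r^F$ fixes the determinant, so each $X_a$ is $\bar G_r^F$-stable and the inclusion $\bar X=X_1\hookrightarrow X$ is $\bar G_r^F$-equivariant; right translation by $t\in T_r^F$ carries $X_a$ to $X_{a\det(t)}$, and the subgroup $\bar T_r^F=(\ker\det)^F$ stabilises each piece. Because $\bar T_r$ is a connected torus we have $H^1(F,\bar T_r)=0$, so $\det\colon T_r^F\to\mathcal{O}_r^\times$ is surjective; hence $T_r^F$ permutes the $X_a$ simply transitively through the identification $\mathcal{O}_r^\times\cong T_r^F/\bar T_r^F$, while $\bar X=X_1$ is non-empty by Lang--Steinberg. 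Fixing representatives $t_a\in T_r^F$ with $\det(t_a)=a$, right translation by $t_a$ yields $\bar G_r^F$-equivariant isomorphisms $H_c^i(X_1)\xrightarrow{\sim}H_c^i(X_a)$, which together identify $H_c^i(X)=\bigoplus_a H_c^i(X_a)$ with the induced module $\mathrm{Ind}_{\bar T_r^F}^{T_r^F}H_c^i(\bar X)$ as a module over $\bar G_r^F\times T_r^F$, with $\bar G_r^F$ acting on the coefficients.

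Finally I would pass to isotypic parts. Taking the $\theta$-part for the right $T_r^F$-action (an eigenspace, since $T_r^F$ is abelian and $\theta$ linear), Frobenius reciprocity for the induced module gives a $\bar G_r^F$-equivariant isomorphism $H_c^i(X)_\theta\cong H_c^i(\bar X)_{\theta|_{\bar T_r^F}}=H_c^i(\bar X)_{\bar\theta}$; summing over $i$ with signs then yields $\mathrm{Res}^{G_r^F}_{\bar G_r^F}R_{T_r,U_r}^{\theta}\cong R_{\bar T_r,\bar U_r}^{\bar\theta}$. I expect the step needing the most care to be this final identification of isotypic components: one must check that the commuting $\bar G_r^F$-action is transported correctly by the translations $t_a$ (so that the identification with an induced module is genuinely $\bar G_r^F$-equivariant), and keep the left/right conventions and the meaning of $(-)_\theta$ consistent throughout. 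A secondary point is to justify, in the Greenberg-functor picture over $\overline{\mathbb{F}}_q$, that $\det$ really splits $X$ into open-and-closed pieces indexed by the discrete group $\mathcal{O}_r^\times$.
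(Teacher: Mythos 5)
Your argument is correct: the determinant fibers $L^{-1}(FU_r)$ into open-and-closed pieces indexed by $\mathcal{O}_r^{\times}$, the piece over $1$ is exactly $\bar L^{-1}(F\bar U_r)$ (using $\bar U_r=U_r$), and Lang's theorem for the connected group $\bar T_r$ gives the transitivity of the $T_r^F$-action on the pieces, so that the cohomology becomes $\mathrm{Ind}_{\bar T_r^F}^{T_r^F}H_c^i(\bar X)$ and Frobenius reciprocity identifies the $\theta$- and $\bar\theta$-isotypic parts $\bar G_r^F$-equivariantly. The paper itself gives no argument here, deferring entirely to the proof of \cite[Proposition~4.1]{Chen_2019_flag_orbit}; the argument there is in the same spirit (translating $\bar L^{-1}(F\bar U_r)$ by $T_r^F$ to cover $L^{-1}(FU_r)$), so you have in effect written out the details the paper omits, and your two flagged points of care (equivariance of the translations $t_a$, and the open-closed decomposition in the Greenberg picture) are indeed the only places requiring attention and both go through.
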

\begin{proof}
This is proved in the argument of \cite[Proposition~4.1]{Chen_2019_flag_orbit}. (While  \cite{Chen_2019_flag_orbit} assumes $\mathrm{char}(\mathcal{O})>0$, the argument of this property works for any $\mathcal{O}$.)
\end{proof}

Let $\psi$ be an irreducible character of the additive group of $\mathbb{F}_q$. Write $T_r^{r-1}$ for the kernel of the reduction map $T_r\rightarrow T_{r-1}$; this can be viewed as the additive group of the Lie algebra $\mathfrak{t}$ of $T_1$. Then there is a unique $\tau_{\theta}\in \mathfrak{t}^F$ such that 
$$\theta(t)=\psi\left(\mathrm{Tr}\left(\frac{1}{\pi^{r-1}}(t-I)\cdot\tau_{\theta}\right)\right)$$ 
for any $t\in (T_r^{r-1})^F$, where $I$ denotes the identity matrix. We call $\theta$ \emph{regular} if $\tau_{\theta}$ is regular. For $\mathrm{GL}_n$ this regularity is equivalent to the regularity in \cite{Lusztig2004RepsFinRings} and \cite{Sta2009Unramified}, as can be seen from the argument of \cite[Proposition~2.2]{ChenStasinski_2023_algebraisation_II}.

\vspace{2mm} When $\theta$ is regular, the character of $R_{T_r,U_r}^{\theta}$ is explicitly determined in \cite[Theorem~4.4]{ChenStasinski_2023_algebraisation_II} (see also \cite[Proposition~2.2 and Remark~4.6]{ChenStasinski_2023_algebraisation_II}). So, according to Proposition~\ref{prop:main}, for $\mathrm{GL}_2$ one should focus on the case that $\theta$ is neither regular nor a trivial inflation; for this we will use Lemma~\ref{lemm:GL_n to SL_n} and the below lemma.

\begin{lemm}\label{lemm:conductor}
Assume that $n=2$. If $\theta$ is not regular, there is a positive integer $r'<r$ such that, for some   $\alpha\in\mathrm{Irr}(\mathcal{O}_r^{\times})$, the character $\theta\cdot\alpha(\det(-))$ is  the trivial inflation of some $\theta'\in\mathrm{Irr}(T_{r'}^F)$.
\end{lemm}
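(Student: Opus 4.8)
The plan is to work on the top step of the congruence filtration of the torus. Since $T$ is Coxeter, for $\mathrm{GL}_2$ it is the non-split (elliptic) maximal torus; writing $E$ for the unramified quadratic extension of $\mathrm{Frac}(\mathcal{O})$ and $\mathcal{O}_{E,r}:=\mathcal{O}_E/\pi^r$ (whose residue field is $\mathbb{F}_{q^2}$, as $\pi$ remains a uniformiser), I would use the identification $T_r^F\cong(\mathcal{O}_{E,r})^{\times}$, under which $\det$ restricts to the norm map $N$ of $E/\mathrm{Frac}(\mathcal{O})$. The kernel $(T_r^{r-1})^F$ of $T_r^F\to T_{r-1}^F$ is $1+\pi^{r-1}\mathcal{O}_{E,r}$, which I identify additively with $\mathfrak{t}^F\cong\mathbb{F}_{q^2}$ via $1+\pi^{r-1}X\leftrightarrow X$; thus $\theta$ is a trivial inflation from level $r'$ exactly when it is trivial on $1+\pi^{r'}\mathcal{O}_{E,r}$. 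The first step is to translate the hypothesis: since the trace pairing $(X,\tau)\mapsto\mathrm{Tr}(X\tau)$ on $\mathfrak{t}^F$ is non-degenerate, and since for $n=2$ the element $\tau_{\theta}$ is regular precisely when its two eigenvalues (the Galois conjugates of the corresponding element of $\mathbb{F}_{q^2}$) are distinct, the non-regularity of $\theta$ forces $\tau_{\theta}=cI$ to be scalar with $c\in\mathbb{F}_q$.

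Next I would compute the effect of the $\det$-twist on this top step. For $X\in\mathfrak{t}^F$ and $r\geq2$ we have $2(r-1)\geq r$, so the quadratic term of the norm dies modulo $\pi^r$ and
\begin{equation*}
\det(1+\pi^{r-1}X)=1+\pi^{r-1}\mathrm{Tr}(X).
\end{equation*}
Writing $\mu_{\alpha}\in\mathbb{F}_q$ for the parameter recording the restriction of $\alpha\in\mathrm{Irr}(\mathcal{O}_r^{\times})$ to $1+\pi^{r-1}\mathcal{O}_r$, a direct evaluation then gives
\begin{equation*}
(\alpha\circ\det)(1+\pi^{r-1}X)=\psi\bigl(\mathrm{Tr}(\mu_{\alpha}X)\bigr),\qquad \theta(1+\pi^{r-1}X)=\psi\bigl(\mathrm{Tr}(cX)\bigr),
\end{equation*}
where I have used $\tau_{\theta}=cI$ and absorbed the scalars $\mu_{\alpha},c\in\mathbb{F}_q$ into the common trace form. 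Because characters extend from subgroups of abelian groups, the restriction $\mathrm{Irr}(\mathcal{O}_r^{\times})\to\mathrm{Irr}(1+\pi^{r-1}\mathcal{O}_r)$ is surjective, so $\mu_{\alpha}$ may be prescribed to be any element of $\mathbb{F}_q$.

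To finish, choose $\alpha$ with $\mu_{\alpha}=-c$; then the two characters above cancel on $(T_r^{r-1})^F$, so $\theta\cdot\alpha(\det(-))$ is trivial there and hence factors through $T_{r-1}^F$, which proves the claim with $r'=r-1\geq1$. (The degenerate case $\tau_{\theta}=0$ is simply $c=0$, $\alpha=1$.) The step I expect to be the real crux — and the reason the non-regularity hypothesis cannot be dropped — is this cancellation: twisting by $\alpha\circ\det$ only ever shifts $\tau_{\theta}$ by scalar matrices, so it can lower the conductor only when the top-level obstruction $\tau_{\theta}$ is already scalar. The remaining work is bookkeeping: verifying $\det|_{T_r^F}=N$ and matching the two parametrisations (of $\theta$ and of $\alpha$) so that the scalars line up inside the single trace pairing.
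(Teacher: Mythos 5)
Your proposal is correct and follows essentially the same route as the paper: non-regularity forces $\tau_{\theta}$ to be scalar, the determinant on the top congruence kernel reduces to $1+\pi^{r-1}\mathrm{Tr}(\cdot)$ because the quadratic term vanishes modulo $\pi^r$, and a character of $1+\pi^{r-1}\mathcal{O}_r$ is extended to all of $\mathcal{O}_r^{\times}$ by abelianness so as to cancel $\theta$ on $(T_r^{r-1})^F$. The identification of $T_r^F$ with $\mathcal{O}_{E,r}^{\times}$ and of $\det$ with the norm is extra (harmless) scaffolding not needed in the paper's argument.
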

\begin{proof}
If $\theta$ is not regular, then $\tau_{\theta}$ is a central element in $\mathfrak{t}$, so $\tau_{\theta}=\mathrm{diag}(s,s)$ for some $s\in {\mathbb{F}}_q$, hence for any $t\in ({T}_r^{r-1})^F$ we have $\theta(t)=\psi(s\cdot\mathrm{Tr}(\frac{1}{\pi^{r-1}}(t-I)))$. However, for $t\in ({T}_r^{r-1})^F$ we have $\det(t)=1+\mathrm{Tr}(t-I)$, so
$$\theta(t)=\psi(s\cdot\frac{1}{\pi^{r-1}}(\det(t)-1)).$$
Note that $x\mapsto \frac{1}{\pi^{r-1}}(x-1)$ is a group isomorphism between the reduction kernel $K:=\mathrm{Ker}(\mathcal{O}_r^{\times}\rightarrow \mathcal{O}_{r-1}^{\times})$ to the additive group of $\mathbb{F}_q$, thus $\psi(s\cdot \frac{1}{\pi^{r-1}}((-)-1))$ is a character $\alpha'$ of $K$; since $\mathcal{O}_r^{\times}$ is abelian, $\alpha'$ extends to a character $\alpha''$ of $\mathcal{O}_r^{\times}$. Therefore, on $(T_r^{r-1})^F$ we have $\theta=\alpha''(\det)$, so $\theta\cdot{\alpha''}^{-1}(\det)$ is the trivial inflation of some $\theta'\in\mathrm{Irr}(T_{r'}^F)$ for some $r'<r$.
\end{proof}

\vspace{2mm} For $\mathbb{G}=\mathrm{GL}_2$, if $\theta$ is not regular, write $r_0$ for the smallest possible $r'$ in Lemma~\ref{lemm:conductor}, and write $\theta_0$ for the corresponding $\theta'$. Then Lemma~\ref{lemm:conductor} implies that either $r_0=1$, or $r_0>1$ and $\theta_0$ is regular (for $T_{r_0}^F$).

\begin{prop}\label{prop:GL_2}
Assume that $\mathbb{G}=\mathrm{GL}_2$ and $T$ is Coxeter. We have:
\begin{itemize}
\item[(i)] If $\theta$ is regular, then $(-1)^rR_{T_r,U_r}^{\theta}$ is irreducible and of dimension $(q-1)q^{r-1}$;
\item[(ii)] if $\theta$ is not regular and $r_0>1$, then $(-1)^{r_0}R_{T_r,U_r}^{\theta}$  is irreducible and of dimension $(q-1)q^{r_0-1}$; 
\item[(iii)] if $\theta$ is not regular, $r_0=1$, and $\theta_0$ is in general position, then $-R_{T_r,U_r}^{\theta}$ is irreducible and of dimension $q-1$; 
\item[(iv)] if $\theta$ is not regular, $r_0=1$, and $\theta_0$ is not in general position, then $R_{T_r,U_r}^{\theta}=\sigma_1-\sigma_2$, where $\sigma_1$ is a $1$-dimensional representation and $\sigma_2$ is a $q$-dimension irreducible representation.
\end{itemize}
\end{prop}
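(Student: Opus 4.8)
The plan is to reduce everything to four cases governed by the invariant $\tau_\theta$ and the conductor data $(r_0,\theta_0)$, using the two structural tools already available: Proposition~\ref{prop:main} (trivial inflation preserves the Deligne--Lusztig representation) and the character-twisting observation of Lemma~\ref{lemm:conductor}, together with the known $r=1$ theory from \cite{Lusztig_1976_CoxOrbitsFrob,DL1976} and the regular-case character formula of \cite{ChenStasinski_2023_algebraisation_II}.

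\emph{Case (i).} If $\theta$ is regular, I would invoke the explicit character formula of \cite[Theorem~4.4]{ChenStasinski_2023_algebraisation_II}, which computes the character of $R_{T_r,U_r}^\theta$ directly; evaluating at the identity gives the dimension $(q-1)q^{r-1}$, and Proposition~\ref{lemma:DI} with $\theta$ in general position (which regularity forces for Coxeter $T$ in $\mathrm{GL}_2$, since the centralizer of a regular $\tau_\theta$ meets $N/T$ only in the identity) gives $\langle R,R\rangle=1$, so $\pm R_{T_r,U_r}^\theta$ is irreducible; the sign $(-1)^r$ is read off from the virtual dimension or from the known sign in the cited character formula.

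\emph{Case (ii).} Here $\theta$ is not regular but $r_0>1$ with $\theta_0$ regular for $T_{r_0}^F$. First I would use Lemma~\ref{lemm:conductor} to write $\theta=\tilde\theta_0\cdot\alpha(\det(-))^{-1}$ for a suitable $\alpha\in\mathrm{Irr}(\mathcal{O}_r^\times)$, where $\tilde\theta_0$ is the trivial inflation of $\theta_0$. Since tensoring by a one-dimensional character $\alpha\circ\det$ commutes with the Deligne--Lusztig construction (the linear character factors through the determinant and pulls through the cohomology), this identifies $R_{T_r,U_r}^\theta$ with $(\alpha\circ\det)^{-1}\otimes R_{T_r,U_r}^{\tilde\theta_0}$; then Proposition~\ref{prop:main} gives $R_{T_r,U_r}^{\tilde\theta_0}\cong\widetilde{R_{T_{r_0},U_{r_0}}^{\theta_0}}$, and applying case (i) at level $r_0$ (with $\theta_0$ regular) yields irreducibility, sign $(-1)^{r_0}$, and dimension $(q-1)q^{r_0-1}$, which is preserved under tensoring by the linear character.

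\emph{Cases (iii) and (iv).} Now $r_0=1$, so the same twisting-and-inflation reduction lands on $R_{T_1,U_1}^{\theta_0}$ at level one, where I can quote the classical $\mathrm{GL}_2(\mathbb{F}_q)$ Deligne--Lusztig theory: for $\theta_0$ in general position (i.e.\ not fixed by the nontrivial Weyl element) $-R_{T_1,U_1}^{\theta_0}$ is the irreducible cuspidal of dimension $q-1$, giving (iii); and for $\theta_0$ not in general position the virtual representation decomposes as the difference of the trivial-type one-dimensional piece and the $q$-dimensional Steinberg-type constituent, giving (iv). Tensoring back by $(\alpha\circ\det)^{-1}$ preserves dimensions and irreducibility throughout.

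The main obstacle I expect is \textbf{verifying that tensoring by $\alpha\circ\det$ genuinely commutes with the functor $R_{T_r,U_r}^{(-)}$} at the level of the virtual representation; one must check that the linear character $\alpha\circ\det$ of $G_r^F$ pulls back to the correct modification of the character $\theta$ on $T_r^F$ under the cohomological construction, so that $(\alpha\circ\det)\otimes R_{T_r,U_r}^\theta\cong R_{T_r,U_r}^{\theta\cdot\alpha(\det|_{T})}$. This is a standard compatibility, but it is the linchpin that lets all three nontrivial cases inherit their structure from the regular or level-one cases. A secondary technical point is pinning down the \emph{sign} in each case: I would extract it uniformly from the parity of the relevant cohomological degrees (equivalently, the sign appearing in the cited character formulae), confirming $(-1)^r$, $(-1)^{r_0}$, $-1$, and $+1$ respectively.
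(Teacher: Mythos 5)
Your reduction strategy for (ii)--(iv) hinges on the identity $(\alpha\circ\det)\otimes R_{T_r,U_r}^{\theta}\cong R_{T_r,U_r}^{\theta\cdot\alpha(\det|_{T})}$, which you flag as the "linchpin" and call a "standard compatibility." This is precisely where the gap lies: for $r\geq 2$ this compatibility is \emph{not} available off the shelf. The standard route to such identities at level one is the commutation of Deligne--Lusztig induction with $p$-constant class functions, and the paper explicitly remarks (after the proof of this proposition) that the known generalisation of that commutation to $r\geq 2$ does not apply here, because $\alpha\circ\det$ fails to be $p$-constant once $r\geq 2$ (the topologically unipotent part of an element of $\mathrm{GL}_2(\mathcal{O}_r)$ can have nontrivial determinant in $1+\pi\mathcal{O}_r$). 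One can likely still prove the twisting identity directly, by observing that $\det$ kills $FU_r$ and hence fibres the Lang variety $L^{-1}(FU_r)$ over the finite set $\mathcal{O}_r^{\times}$ compatibly with both the $G_r^F$- and $T_r^F$-actions; but that argument is not in the cited literature and would need to be written out. As it stands, every nontrivial case of your proof rests on an unproved step.

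The paper's actual proof is engineered to avoid this issue. Since $\det$ is trivial on $\mathrm{SL}_2$, the characters $\theta$ and $\theta\cdot\alpha(\det)$ have the same restriction to $\bar{T}_r^F$, so by Lemma~\ref{lemm:GL_n to SL_n} the representations $R_{T_r,U_r}^{\theta}$ and $R_{T_r,U_r}^{\widetilde{\theta_0}}$ have equal restrictions to $\mathrm{SL}_2(\mathcal{O}_r)$ and in particular equal dimensions; the dimension is then computed via Proposition~\ref{prop:main} and the regular (or classical level-one) case. Irreducibility up to sign is obtained by applying Proposition~\ref{lemma:DI} directly to $\theta$ itself, with no reduction to $\theta_0$ needed. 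For (iv), where your route would import the level-one decomposition wholesale, the paper instead identifies the restriction to $\mathrm{SL}_2(\mathcal{O}_r)$ as $1-\widetilde{\mathrm{St}}$ and runs a Clifford-theory argument to pin down $\sigma_1$ and $\sigma_2$; this yields the constituent dimensions $1$ and $q$ without ever asserting $R_{T_r,U_r}^{\theta}=(\alpha\circ\det)^{-1}\otimes\widetilde{R_{T_1,U_1}^{\theta_0}}$ as $G_r^F$-representations. If you can supply a genuine proof of the determinant-twisting compatibility at level $r$, your argument would go through and would in fact give slightly more (an exact identification of $R_{T_r,U_r}^{\theta}$ as a twist of an inflation); without it, you should follow the restriction-to-$\mathrm{SL}_2$ route.
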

\begin{proof}
(i): This is just a special case of \cite[Theorem~4.4]{ChenStasinski_2023_algebraisation_II} (see also \cite[Proposition~2.2 and Remark~4.6]{ChenStasinski_2023_algebraisation_II}).

\vspace{2mm} (ii) and (iii): By Proposition~\ref{lemma:DI}, the virtual representation $R_{T_r,U_r}^{\theta}$ is irreducible up to a sign. Let $\widetilde{\theta_0}$ be the trivial inflation of $\theta_0$ to $T_r^F$. Then
$$\dim R_{T_r,U_r}^{\theta}=\dim R_{T_r,U_r}^{\widetilde{\theta_0}}=\dim R_{T_{r_0},U_{r_0}}^{{\theta_0}}=(-1)^{r_0}(q-1)q^{r_0-1},$$
where the first equality follows from Lemma~\ref{lemm:GL_n to SL_n} and Lemma~\ref{lemm:conductor}, the second equality follows from Proposition~\ref{prop:main}, and the third equality follows from the regular case (in (i)) and the well-known finite field case (see e.g.\ \cite[Section~12.5]{DM_book_2nd_edition}).

\vspace{2mm} (iv): By Proposition~\ref{lemma:DI}, the virtual representation $R_{T_r,U_r}^{\theta}$ has two inequivalent irreducible constituents  up to signs, and the same argument as above shows that  $\dim R_{T_r,U_r}^{\theta}=1-q$. Moreover, since in this case $\theta_0|_{\bar{T}_1^F}=1$, by Lemma~\ref{lemm:GL_n to SL_n} we get 
$$\mathrm{Res}^{G_r^F}_{\bar{G}_r^F}R_{T_r,U_r}^{\theta}=\mathrm{Res}^{G_r^F}_{\bar{G}_r^F}R_{T_r,U_r}^{\widetilde{\theta_0}}=\mathrm{Res}^{G_r^F}_{\bar{G}_r^F}R_{T_r,U_r}^{1}=1-\widetilde{\mathrm{St}},$$ 
where $\mathrm{St}$ stands for the Steinberg representation of $\bar{G}_1^F$; in particular, at least one irreducible constituent of $R_{T_r,U_r}^{\theta}$ has positive coefficient. So, since $\dim R_{T_r,U_r}^{\theta}<0$, we can write $R_{T_r,U_r}^{\theta}=\sigma_1-\sigma_2$ where the $\sigma_i$'s are inequivalent irreducible representations. As $\mathrm{Res}^{G_r^F}_{\bar{G}_r^F}\sigma_1$ contains the trivial representation and $\mathrm{Res}^{G_r^F}_{\bar{G}_r^F}\sigma_2$ contains $\widetilde{\mathrm{St}}$, by Clifford theory all irreducible constituents of $\mathrm{Res}^{G_r^F}_{\bar{G}_r^F}\sigma_1$ (resp.\ $\mathrm{Res}^{G_r^F}_{\bar{G}_r^F}\sigma_2$) are of dimension $1$ (resp.\ $q$). In particular, $\mathrm{Res}^{G_r^F}_{\bar{G}_r^F}\sigma_1$ and $\mathrm{Res}^{G_r^F}_{\bar{G}_r^F}\sigma_2$ have no common irreducible constituents. So the equality $\mathrm{Res}^{G_r^F}_{\bar{G}_r^F}(\sigma_1-\sigma_2)=1-\widetilde{\mathrm{St}}$ actually implies that $\mathrm{Res}^{G_r^F}_{\bar{G}_r^F}\sigma_1=1$ and $\mathrm{Res}^{G_r^F}_{\bar{G}_r^F}\sigma_2=\widetilde{\mathrm{St}}$, from which the assertion follows.
\end{proof}

For $r=1$, if $\theta$ is neither in general position nor trivial, one can use the commutativity between Deligne--Lusztig induction and $p$-constant class functions to determine $R_{T_1,U_1}^{\theta}$ (see the argument in \cite[Page~193]{DM_book_2nd_edition}). This commutativity has been generalised to $r\geq 2$ in \cite[Corollary~3.6]{Chen_2018_GreenFunction}, but it does not work in the above situation, as $\det$ is no more $p$-constant for $r\geq 2$.

\vspace{2mm} Proposition~\ref{prop:main} and Proposition~\ref{prop:GL_2} immediately imply:

\begin{coro}\label{coro:GL_2}
Assume that $\mathbb{G}=\mathrm{GL}_2$ and $T$ is Coxeter. Then
$$\left\{\dim R_{T_r,U_r}^{\theta} \mid  \theta\in\mathrm{Irr}(T_r^F) \right\}=\left\{(-1)^{i}(q-1)q^{i-1}\mid  i\in \{ 1,2,...,r \}  \right\};$$
moreover, if $|\dim R_{T_r,U_r}^{\theta}|>q-1$, then $R_{T_r,U_r}^{\theta}$ is irreducible up to sign, and the sign of $R_{T_r,U_r}^{\theta}$ is $(-1)^{1+\log_q\frac{|\dim R_{T_r,U_r}^{\theta}|}{q-1}}$.
\end{coro}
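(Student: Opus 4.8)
The plan is to derive Corollary~\ref{coro:GL_2} as a bookkeeping consequence of Proposition~\ref{prop:main} and Proposition~\ref{prop:GL_2}, so the work is organizing the case analysis rather than proving anything geometric. First I would establish the set equality by exhibiting, for each target dimension $(-1)^i(q-1)q^{i-1}$ with $i\in\{1,\dots,r\}$, a character $\theta$ achieving it, and conversely showing every $\theta$ lands in this list. For the forward inclusion I would invoke Proposition~\ref{prop:GL_2}: if $\theta$ is regular we get dimension $(-1)^r(q-1)q^{r-1}$ (case $i=r$); if $\theta$ is not regular with $r_0>1$ we get $(-1)^{r_0}(q-1)q^{r_0-1}$ (case $i=r_0$); and if $r_0=1$ the dimension is either $-(q-1)$ or $1-q$ (both case $i=1$). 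Since $r_0$ ranges over $\{1,\dots,r-1\}$ as $\theta$ varies and the regular case supplies $i=r$, every value in the right-hand set is attained, giving the reverse inclusion as well. I would note the values $i=1$ and the two subcases of Proposition~\ref{prop:GL_2}(iii),(iv) both produce absolute dimension $q-1$, consistent with the stated set.

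Next I would address the ``moreover'' clause. The hypothesis $|\dim R_{T_r,U_r}^{\theta}|>q-1$ forces $i\geq 2$ in the classification above, which by Proposition~\ref{prop:GL_2} means we are in case (i) (regular, $i=r$) or case (ii) (non-regular, $r_0>1$, $i=r_0$). In both of these cases Proposition~\ref{prop:GL_2} asserts that $(-1)^i R_{T_r,U_r}^{\theta}$ is genuinely irreducible, which is exactly the statement that $R_{T_r,U_r}^{\theta}$ is irreducible up to sign with sign $(-1)^i$. The only remaining task is to check that the closed-form expression $(-1)^{1+\log_q\frac{|\dim R_{T_r,U_r}^{\theta}|}{q-1}}$ recovers $(-1)^i$. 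Here I would simply observe that $|\dim R_{T_r,U_r}^{\theta}|=(q-1)q^{i-1}$ gives $\log_q\frac{|\dim R_{T_r,U_r}^{\theta}|}{q-1}=i-1$, so $(-1)^{1+(i-1)}=(-1)^i$, matching the sign from Proposition~\ref{prop:GL_2}.

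The step requiring the most care is purely notational: ensuring the translation between the index $i$ used in the corollary's target set and the parameters $r$ (regular case) and $r_0$ (non-regular case) appearing in Proposition~\ref{prop:GL_2} is airtight, since the sign and the dimension are coupled through the same exponent. I expect no genuine obstacle, only the need to state clearly that in case (i) the exponent is $r$ while in case (ii) it is $r_0$, and that the condition $|\dim R_{T_r,U_r}^{\theta}|>q-1$ cleanly excludes cases (iii) and (iv) where $R_{T_r,U_r}^{\theta}$ either is irreducible up to sign with dimension exactly $q-1$ or decomposes as $\sigma_1-\sigma_2$. I would remark in passing that Proposition~\ref{prop:main} is what guarantees the dimension is dictated solely by the ``conductor'' data $(r_0,\theta_0)$, so that the formula depends on $\theta$ only through $|\dim R_{T_r,U_r}^{\theta}|$, which is the conceptual content making the sign formula well-posed.
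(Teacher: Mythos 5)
Your proposal is correct and matches the paper's intent exactly: the paper offers no argument beyond stating that Proposition~\ref{prop:main} and Proposition~\ref{prop:GL_2} immediately imply the corollary, and your write-up is precisely that bookkeeping, with the index $i$ correctly matched to $r$ in case (i), to $r_0$ in case (ii), and to $1$ in cases (iii)--(iv), and with the sign formula verified via $\log_q\frac{(q-1)q^{i-1}}{q-1}=i-1$. No issues.
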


\begin{remark}
The dimensions and decompositions of  $R_{\bar{T}_r,\bar{U}_r}^{\bar{\theta}}$ (as a representation of  $\bar{G}_r^F=\mathrm{SL}_2(\mathcal{O}_r)$) are  the same as those described for $R_{T_r,U_r}^{\theta}$ in Proposition~\ref{prop:GL_2} (via Lemma~\ref{lemm:GL_n to SL_n}), except for the following cases depending on the parity of $q$:
\begin{itemize}
\item If $q$ is odd, the exception appears in (iii): When $\theta_0|_{\bar{T}_1^F}$ is the quadratic character, $-R_{\bar{T}_r,\bar{U}_r}^{\bar{\theta}}$ is a sum of two inequivalent irreducible representations of dimension $\frac{q-1}{2}$.

\item If $q$ is even, the exception appears in (i) and (ii); it suffices to describe the situation in (i): When $q$ is even, it can happen that $\theta$ is regular (or equivalently, $\bar{\theta}$ is regular in the sense of \cite{Lusztig2004RepsFinRings},\cite{Sta2009Unramified}) while $\bar{\theta}$ is \emph{not} in general position. Whenever this is the case, $-R_{\bar{T}_r,\bar{U}_r}^{\bar{\theta}}$ is the sum of two inequivalent irreducible constituents by \cite[Proposition~3.3]{Sta2009Unramified}; the constituents share the same dimension by Clifford's theorem, hence have dimension $\frac{q^r-q^{r-1}}{2}$. 
\end{itemize}  
Note that this agrees with the $\mathrm{SL}_2(\mathbb{F}_q[[\pi]]/\pi^2)$ case computed in \cite[Subsection~3.4]{Lusztig2004RepsFinRings}.
\end{remark}

\section{The sign of $R_{T_r,U_r}^{\theta}$}\label{sec:remark}

At this stage, it seems reasonable to expect: 

\begin{conj}
For general $(\mathbb{G},T,\theta,r)$, the sign of $R_{T_r,U_r}^{\theta}$  is
\begin{equation*}\label{conj:expectation}
\mathrm{sgn}(R_{T_r,U_r}^{\theta})=
(-1)^{ \left(\mathrm{rk}_q(T_1)+\mathrm{rk}_q(G_1)\right) \cdot  \left(1+\frac{\log_q|\dim R_{T_r,U_r}^{\theta}|_p}{\#\Phi^+}\right) },
\end{equation*}
where $p:=\mathrm{char}(\mathbb{F}_q)$, $(-)_p$ denotes the $p$-part of a positive integer, $\Phi^+$ denotes the set of positive roots of $G_1$ with respect to $T_1$, and $\mathrm{rk}_q(-)$ denotes the $\mathbb{F}_q$-rank of an algebraic group.
\end{conj}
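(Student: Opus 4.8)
The plan is to recast the conjecture so that a single integer invariant carries the content, and then to reduce to the regular case. Writing $\epsilon_H:=(-1)^{\mathrm{rk}_q(H)}$ and $\epsilon:=\epsilon_{T_1}\epsilon_{G_1}$, and setting $\nu(\theta):=\frac{\log_q|\dim R_{T_r,U_r}^{\theta}|_p}{\#\Phi^+}$, the asserted equality reads $\mathrm{sgn}(R_{T_r,U_r}^{\theta})=\epsilon^{\,1+\nu(\theta)}$. Its content is therefore twofold: that $1+\nu(\theta)$ is, modulo $2$, a well-defined integer---conjecturally a ``conductor level'' of $\theta$---and that raising this level multiplies the sign by $\epsilon$. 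As a first sanity check I would redo $r=1$: since $T_1^F$ is a finite torus, $|T_1^F|$ is prime to $p$, so $\dim R_{T_1,U_1}^{\theta}=\pm|G_1^F|_{p'}/|T_1^F|$ has trivial $p$-part, whence $\nu=0$ and the formula collapses to the classical Deligne--Lusztig sign $\epsilon_{G_1}\epsilon_{T_1}$. This fixes the normalisation and identifies the $p$-part of the dimension as the level-detecting quantity.

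Next I would reduce to regular characters. Two operations preserve both the sign and the $p$-part of the dimension: tensoring by a linear character $\lambda$ of $G_r^F$, via $R_{T_r,U_r}^{\theta}\otimes\lambda\cong R_{T_r,U_r}^{\theta\cdot(\lambda|_{T_r^F})}$, and passing across a trivial inflation using Proposition~\ref{prop:main}. Generalising Lemma~\ref{lemm:conductor} beyond $\mathrm{GL}_2$ is then the first substantive step: for non-regular $\theta$ one analyses $\tau_\theta$, twists off the part of $\theta$ that factors through the reductive quotient attached to the connected centre of the centraliser Levi $M:=C_{\mathbb{G}}(\tau_\theta)$, and thereby exhibits $\theta$, up to twist, as the trivial inflation of a character that is regular for $M$ at some conductor level. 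I would then invoke a higher-level transitivity $R_{T_r,U_r}^{\theta}=\pm\,\mathcal{R}_{M_r}^{G_r}\big(R_{T_r,U_r^{M}}^{\theta}\big)$ of Deligne--Lusztig induction, with $U^{M}:=U\cap M$, to transport the computation to $M$: the sign picks up a factor $\epsilon_{G_1}\epsilon_{M_1}$ while the dimension is multiplied by the prime-to-$p$ index $[G_r^F:M_r^F]_{p'}$, so the $p$-part of the dimension survives intact. Feeding in the conjecture for $M$ by induction on semisimple rank then closes the reduction, provided a compatibility between the $\mathbb{F}_q$-ranks and the positive-root counts of $G$ and $M$ holds modulo $2$.

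For regular $\theta$ the explicit character is available, at least for $\mathbb{G}=\mathrm{GL}_n$: I would read the dimension off \cite[Theorem~4.4]{ChenStasinski_2023_algebraisation_II}, check that its $p$-part is exactly $q^{(\rho-1)\#\Phi^+}$ with $\rho$ the conductor level (so $1+\nu(\theta)=\rho$), and compute the sign, which the same reference should give as $\epsilon^{\,\rho}$; the Coxeter instance of this is precisely Proposition~\ref{prop:GL_2}(i)--(ii) and Corollary~\ref{coro:GL_2}. The per-level factor $\epsilon$ is the key structural statement---each increment of the level multiplies the sign by $(-1)^{\mathrm{rk}_q(T_1)+\mathrm{rk}_q(G_1)}$---and I would try to see it geometrically as the parity of the dimension of the relevant higher Deligne--Lusztig variety, which sits over the previous level as a $q^{\#\Phi^+}$-bundle.

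The hard part is everything outside the $\mathrm{GL}_n$, regular range. For general $\mathbb{G}$ and general (non-Coxeter) $T$ there is at present no character formula and no decomposition of $R_{T_r,U_r}^{\theta}$ into irreducibles, so neither the dimension nor its $p$-part is known a priori; even the nonvanishing of $\dim R_{T_r,U_r}^{\theta}$, needed for $\log_q|\dim|_p$ to be defined, must be established. The genuine obstacle is thus a Jordan decomposition for higher-level Deligne--Lusztig characters that is compatible both with signs and with the level filtration, together with the rank/root-count congruence flagged above; this is exactly the structure that the orthogonality-based arguments of the present note (Propositions~\ref{lemma:DI} and~\ref{lemma:inner product linking different levels}) do not reach, being special to the Coxeter, trivial-parameter case. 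A secondary difficulty is small $q$ and the non-split phenomena of the Remark following Corollary~\ref{coro:GL_2}: there $R_{T_r,U_r}^{\theta}$ can split as a signed sum of several equidimensional irreducibles, so ``$\mathrm{sgn}$'' must be read as the sign of the virtual dimension, and one must exclude cancellations that would corrupt the $p$-part bookkeeping.
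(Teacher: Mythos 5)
The statement you are addressing is a conjecture, and the paper offers no proof of it: it only records the cases in which the sign formula is currently known to hold ($FU_r=U_r$; $r=1$ via \cite{DL1976}; strongly generic $\theta$ via \cite{ChenStasinski_2023_algebraisation_II}; elliptic $T$ with $\theta=1$ via Theorem~\ref{thm:main} and Remark~\ref{remark:Chan}; and $\mathrm{GL}_2$, $\mathrm{SL}_2$ via Corollary~\ref{coro:GL_2} and Lemma~\ref{lemm:GL_n to SL_n}). Your $r=1$ sanity check and your reading of Proposition~\ref{prop:GL_2}(i)--(ii) reproduce two of these items correctly, and your reformulation of the exponent as a conductor level is a reasonable way to organise the evidence. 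But what you have written is a research programme, not a proof, and you say as much yourself; so the honest verdict is that the proposal does not establish the statement, and could not be expected to, since the statement is open.

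Beyond the gaps you already flag (no character formula or decomposition outside the generic/Coxeter range, nonvanishing of $\dim R_{T_r,U_r}^{\theta}$, small $q$ and equidimensional splittings), let me point out one concrete internal tension in your reduction step. You propose to pass to the Levi $M=C_{\mathbb{G}}(\tau_\theta)$ via a transitivity $R_{T_r,U_r}^{\theta}=\pm\,\mathcal{R}_{M_r}^{G_r}\bigl(R_{T_r,U_r^{M}}^{\theta}\bigr)$ and assert that ``the $p$-part of the dimension survives intact.'' If that were literally true, the conjecture could not be stable under your reduction: the exponent $\nu(\theta)$ is normalised by $\#\Phi^+$ of the ambient group, so if $\theta$ is regular of level $\rho$ for $M$ one needs $|\dim R_{T_r,U_r}^{\theta}|_p=q^{(\rho-1)\#\Phi^+_G}$ on $G$ but only $q^{(\rho-1)\#\Phi^+_M}$ on $M$; already for $\mathrm{GL}_3$ with $M\cong\mathrm{GL}_2\times\mathrm{GL}_1$ a literally preserved $p$-part would make $\nu(\theta)=(\rho-1)/3$ non-integral and the formula meaningless. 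So the induction functor you need must multiply the dimension by something whose $p$-part is exactly $q^{(\rho-1)(\#\Phi^+_G-\#\Phi^+_M)}$, which is not the na\"ive prime-to-$p$ index $[G_r^F:M_r^F]_{p'}$ and is itself part of what has to be proved. (In $\mathrm{GL}_2$ this issue is invisible because a non-regular $\tau_\theta$ is central, so $M=\mathbb{G}$ and the paper's actual reduction is by conductor level via Lemma~\ref{lemm:conductor} and Proposition~\ref{prop:main}, not by Levi descent.) Note also that Lemma~\ref{lemm:conductor} and Proposition~\ref{prop:main} as stated are restricted to $n=2$ and to Coxeter $T$ with $q\geq 7$ or $\mathbb{G}=\mathrm{GL}_n$ respectively, so neither is available for the general $(\mathbb{G},T,\theta,r)$ the conjecture addresses.
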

 
Note that \eqref{conj:expectation} holds in the following cases: 
\begin{itemize}
\item $FU_r=U_r$;
\item $r=1$ (\cite{DL1976});
\item $q\geq 7$ (or $\mathbb{G}=\mathrm{GL}_n$ or $\mathrm{SL}_n$), with $\theta$ being strongly generic (\cite{ChenStasinski_2023_algebraisation_II});
\item $T$ is elliptic and $\theta=1$ (see Theorem~\ref{thm:main} and Remark~\ref{remark:Chan});
\item $\mathbb{G}=\mathrm{GL}_2$ or $\mathrm{SL}_2$ (Corollary~\ref{coro:GL_2} and Lemma~\ref{lemm:GL_n to SL_n}).
\end{itemize}
It would also be interesting to see if this conjecture extends to the parahoric setting in \cite{ChanetIvanov_CohomRepParahoricGp}.

{}

\vspace{10mm} \noindent {\bf Data availability statement.} We do not analyse or generate any datasets, because our work proceeds within a theoretical and mathematical approach.

\vspace{2mm} \noindent {\bf Conflict of interest statement.} We state that there is no conflict of interest.

\bibliographystyle{alpha}
\bibliography{zchenrefs}

\end{document}